 \theoremstyle{plain}
 \newtheorem{theorem}{Theorem}[section]
 \newtheorem{lemma}[theorem]{Lemma}
 \newtheorem{corollary}[theorem]{Corollary}
 \newtheorem{proposition}[theorem]{Proposition}
 \newtheorem{conjecture}[theorem]{Conjecture}
 \theoremstyle{definition}
 \newtheorem{definition}[theorem]{Definition}
 \theoremstyle{remark}
 \newtheorem{remark}[theorem]{Remark}
\newcommand{\spec}{\operatorname{Spec}}
\newcommand{\chara}{\operatorname{char}}
\newcommand{\hilb}{\operatorname{Hilb}}
\newcommand{\mni}{\medskip\noindent}
\title{On Examples of Supersingular and Rationally Chain Connected Threefolds}
\title{%
  On Examples of Supersingular and Rationally Chain Connected Threefolds}
\author{Santai Qu}
\date{\today}
\begin{document}

\maketitle

\begin{abstract}
In this work, we show that for a certain class of threefolds in positive characteristics, 
rational-chain-connectivity is equivalent to supersingularity.  The same result is 
known for K3 surfaces with elliptic fibrations.  And there are examples of threefolds 
that are both supersingular and rationally chain connected.
\end{abstract}

\section{Introduction}

The goal of this article is to generalize the following result to a certain class of threefolds in positive characteristics.

\begin{theorem}(\cite{open}, Problem 12, p.11 and p.12)\label{openK3}
Let $k$ be an algebraically closed field in positive characteristic.  
If a K3 surface $X$ over $k$ admits an elliptic fibration, then $X$ is supersingular
if and only if any two points on $X$ can be connected by a chain of finitely rational curves.
\end{theorem}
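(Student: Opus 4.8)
The plan is to prove the stated equivalence by closing a cycle of implications that runs through unirationality and uniruledness, namely
\[
\text{supersingular} \;\Longrightarrow\; \text{unirational} \;\Longrightarrow\; \text{rationally chain connected} \;\Longrightarrow\; \text{uniruled} \;\Longrightarrow\; \text{supersingular}.
\]
Here I take supersingular to mean that the Picard number satisfies $\rho(X) = b_2(X) = 22$ (Shioda supersingularity); for K3 surfaces this agrees with the infinite-height condition on the formal Brauer group. Two of these four arrows are soft and hold for any projective variety, while the remaining two carry all the geometric content and are exactly where the hypothesis of an elliptic fibration enters.

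First I would dispose of the soft implications. If $X$ is unirational there is a dominant, generically finite rational map from $\mathbb{P}^2$ to $X$; a general pair of points of $X$ then lifts to a pair of points of $\mathbb{P}^2$ lying on a common line, whose image is a rational curve joining them, so a general pair of points is joined by a single rational curve. By the general theory of rational chain connectedness (the existence of the maximal rationally chain connected fibration and the properness of its quotient), triviality of this quotient on a dense open set forces every pair of points into one equivalence class, so $X$ is rationally chain connected. Conversely, if $X$ is rationally chain connected then, fixing a general point and joining it to another point, the first link of the connecting chain is a nonconstant rational curve through the general point; hence $X$ is covered by rational curves, i.e.\ uniruled.

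The two essential inputs are then the following. For the arrow ``uniruled $\Rightarrow$ supersingular'' I would invoke the fact, going back to Artin's analysis of the formal Brauer group, that a K3 surface of finite height is not uniruled: a dominant map from a ruled surface forces the formal Brauer group to be unipotent, so its height is infinite and $X$ is supersingular. The crux, and the step that genuinely requires the elliptic fibration, is ``supersingular $\Rightarrow$ unirational.'' Writing $\pi\colon X \to \mathbb{P}^1$ for the fibration with generic fibre the genus-one curve $E$ over $K = k(t)$ (passing to the Jacobian fibration if necessary so that a section exists), I would base-change $\pi$ along the iterated Frobenius $t \mapsto t^{p^n}$ of the base. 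Supersingularity makes the obstruction governing the fibration---the relevant Tate--Shafarevich, or formal Brauer, contribution---unipotent and hence annihilated by a power of Frobenius, so that for $n$ large the pulled-back elliptic surface $X_n$ acquires enough sections to be a rational surface. The induced purely inseparable dominant rational map from $X_n$ to $X$ then exhibits $X$ as unirational; this is the classical argument of Shioda and Rudakov--Shafarevich for supersingular elliptic K3 surfaces.

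I expect the Frobenius base-change step to be the main obstacle, for two reasons. First, one must control the relatively minimal model of the pulled-back surface $X_n$ and verify that the extra sections produced really do force rationality, rather than yielding another K3 or an honestly irrational elliptic surface; this is where the Shioda--Tate formula and a careful bookkeeping of the configurations of singular fibres are needed. Second, the whole argument is intrinsically inseparable, so the usual characteristic-zero structure theory (Castelnuovo's criterion, smoothness of generic fibres under base change) is unavailable, and one must work directly with purely inseparable covers and their effect on Picard numbers. Once unirationality is secured in this way, the remaining implications are formal and the cycle closes, yielding the equivalence of supersingularity and rational chain connectedness.
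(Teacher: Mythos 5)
The paper never proves this statement: it is imported verbatim from \cite{open} as a known result, and the only piece the paper establishes itself is the direction ``rationally chain connected $\Rightarrow$ supersingular'' (Theorem~\ref{chaingivessupersingular}, via Koll\'ar's $A_0(X)\simeq\mathbb{Z}$ and Esnault's slope theorem). So your proposal has to stand on its own. Your backward direction is essentially sound: RCC $\Rightarrow$ uniruled $\Rightarrow$ supersingular works, since a uniruled surface admits a generically finite dominant map from a blow-up of $C\times\mathbb{P}^1$, whose $\mathrm{H}^2$ is entirely of slope $1$, and $f^{*}$ is injective on $\mathrm{H}^2_{\text{cris}}\otimes K$; this is a legitimate alternative to the paper's route through $A_0=\mathbb{Z}$.

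The genuine gap is the forward direction. You reduce ``supersingular $\Rightarrow$ RCC'' to ``supersingular $\Rightarrow$ unirational'' and propose to prove the latter by Frobenius base change along $\mathbb{P}^1$. But ``every supersingular elliptic K3 surface is unirational'' is not a theorem: every supersingular K3 has Picard number $22$, hence (at least in characteristic $\geq 5$, where quasi-elliptic fibrations do not exist) carries an elliptic fibration, so your claim is equivalent to the full Artin--Rudakov--Shafarevich--Shioda conjecture, which this paper states as Conjecture~\ref{artinconjecture} and explicitly treats as open outside the Kummer and characteristic-$2$ cases; Corollary~\ref{equivalencefork3surfaces} derives exactly your chain of equivalences \emph{conditionally} on that conjecture. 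The Shioda and Rudakov--Shafarevich arguments you invoke apply only to special surfaces (explicit Kummer/Fermat-type pencils, resp.\ characteristic $2$), and there is no general argument that the normalized Frobenius pullback of a supersingular elliptic K3 acquires enough sections to become rational --- it can perfectly well remain a K3. (A secondary issue: unirationality of the Jacobian fibration does not obviously transfer back to the torsor $X$.) The whole point of phrasing the theorem in terms of chains of rational curves rather than unirationality is that the forward direction can be settled at this weaker level --- roughly, supersingularity forces the formal Brauer group to be $\hat{\mathbb{G}}_a$, which yields a one-parameter inseparable family of rational multisections covering $X$, and any two such multisections are linked through the connected configurations of rational curves in the singular fibers. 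Your route overshoots the target and lands on an open problem.
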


To generalize Theorem~\ref{openK3} to threefolds, 
it is natural to consider the class of threefolds with K3-fibrations.
There are examples of Calabi-Yau threefolds with smooth proper K3-fibrations over $\mathbb{P}_k^1$ constructed by 
Stefan Schr\"oer in \cite{stefan}, which are both supersingular and unirational (Theorem~\ref{unirationalp3} and Theorem~\ref{unirational2}).
In this article, we will show that supersingularity is equivalent to 
rational-chain-connectivity for a broader class of threefolds that are generalizations of Schr\"oer's
examples.  

\subsection{Preliminaries}

To state our results, we first fix some conventions.  Let $k$ be an algebraically closed field
of characteristic $p>0$.  By a Calabi-Yau $n$-fold $X$, 
we mean a smooth, projective $k$-scheme of dimension $n$ such that $\mathrm{H}^i(X, \mathcal{O}_X)=0$
for $0<i<n$ and that $\omega_X\simeq \mathcal{O}_X$.  When $n$ equals two, a Calabi-Yau 2-fold 
is a K3 surface.  And when $n$ is one, a Calabi-Yau 1-fold is just an elliptic curve.

\begin{definition}\label{supersingularitydefinition}
Let $X$ be a smooth projective variety over a perfect field $k$ in characteristic $p>0$.
Let $W=W(k)$ be its Witt vectors.  Denote by $K$ the fraction field of $W$.
We say that $X$ is \emph{supersingular} if the slope $[0,1)$ part of $\mathrm{H}^i_{\text{rig}}(X/K)$ is vanishing for all $i>0$.
And we say that $X$ is \emph{$\mathrm{H}^i$-supersingular} is the slope 
$[0,1)$ part of $\mathrm{H}^i_{\text{rig}}(X/K)$ is vanishing.
\end{definition}

Note that for a smooth and proper variety $X$ over a perfect field $k$ with $\chara k>0$, 
the rigid cohomology $\mathrm{H}^i_{\text{rig}}(X/K)$ coincides with
the crystalline cohomology $\mathrm{H}^i_{\text{cris}}(X/W)$ on $X$. 
Let $n$ be the dimension of $X$.  Some authors use the
definition that $X$ is supersingular if it is $\mathrm{H}^n$-supersingular as in our definition (see \cite{YT}, Definition 13, p.8).
And $\mathrm{H}^n$-supersingularity is equivalent to saying that the formal Brauer group of the surface has infinite height,
see Corollary II.4.3 of \cite{artinmazur}.

One of reasons for the study of supersingularity is its connection with unirationality for K3 surfaces 
conjectured by Artin, Shioda, etc.

\begin{conjecture}(Artin, Rudakov, Shafarevich, Shioda)\label{artinconjecture}
Let $k$ be an algebraically closed field in characteristic $p>0$.  A K3 surface over $k$
is supersingular if and only if it is unirational.
\end{conjecture}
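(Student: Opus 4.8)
The plan is to prove the two implications separately, because they are of entirely different difficulty: one is classical and essentially formal, while the other is the genuine content of the conjecture and is where all the work (and, in small characteristic, the remaining mystery) lies.

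\emph{Unirational implies supersingular.} Suppose $X$ is unirational, so there is a dominant, generically finite rational map $\phi\colon\mathbb{P}^2_k\dashrightarrow X$. Resolving the indeterminacy by a finite sequence of blow-ups, I obtain a smooth projective rational surface $\widetilde{Y}$ and a genuine morphism $\widetilde{\phi}\colon\widetilde{Y}\to X$ that is still dominant and generically finite. By the projection formula the composite $\widetilde{\phi}_{*}\circ\widetilde{\phi}^{*}$ acting on $\mathrm{H}^2_{\text{cris}}(X/W)\otimes K$ is multiplication by $\deg\widetilde{\phi}\neq 0$, so $\widetilde{\phi}^{*}$ is injective. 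Since $\widetilde{Y}$ is rational, its second crystalline cohomology is spanned by algebraic classes and is therefore pure of Frobenius slope $1$; the injection then forces every slope of $\mathrm{H}^2_{\text{cris}}(X/W)$ to equal $1$, i.e. the slope $[0,1)$ part vanishes. For a K3 surface one has $\mathrm{H}^1=\mathrm{H}^3=0$ while $\mathrm{H}^4$ is pure of slope $2$, so Definition~\ref{supersingularitydefinition} is satisfied and $X$ is supersingular. This implication I expect to be routine.

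\emph{Supersingular implies unirational.} This is the hard direction, and my strategy would be to reduce to the elliptic case and then invoke the mechanism behind Theorem~\ref{openK3}. A supersingular K3 surface has Picard number $\rho=b_2=22$ (using the agreement of the crystalline and Shioda notions of supersingularity, i.e. the Tate conjecture for K3 surfaces), so its N\'eron--Severi lattice is as large as possible. I would use this abundance of divisor classes to produce a primitive nef class of self-intersection $0$, hence an elliptic fibration $X\to\mathbb{P}^1_k$. Supersingularity translates into the formal Brauer group having infinite height, which on such a fibration makes the associated torsor and Tate--Shafarevich data highly non-reduced; the plan is to exploit this to build an explicit purely inseparable dominant map from a rational surface onto $X$, thereby upgrading the rational-chain-connectivity output of Theorem~\ref{openK3} to genuine unirationality. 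To handle all supersingular K3 surfaces uniformly I would stratify Ogus's moduli space by the Artin invariant $\sigma_0\in\{1,\dots,10\}$, dispose of the unique $\sigma_0=1$ surface directly, and try to propagate unirationality to the higher strata by an isogeny or specialization argument.

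The main obstacle is precisely this second implication, and it contains two genuinely delicate points. First, passing from ``$X$ is rationally chain connected'' to an honest dominant map $\mathbb{P}^2\dashrightarrow X$ is not formal: rational chain connectivity is far weaker than unirationality, and closing the gap requires carrying out the inseparable-cover construction explicitly rather than abstractly. Second, the elliptic-fibration and torsor constructions degenerate in characteristics $p=2,3$ (wild ramification, quasi-elliptic fibrations, non-smooth torsors), so those cases demand separate treatment. For these reasons the statement is recorded here as a conjecture rather than a theorem; within the scope of this article only the ``unirational $\Rightarrow$ supersingular'' implication, together with particular families of examples, is fully within reach of the present methods.
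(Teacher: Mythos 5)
The statement you were asked to prove is recorded in the paper as Conjecture~\ref{artinconjecture}: the paper offers no proof of it, and only ever uses it either as an explicit hypothesis (Corollary~\ref{equivalencefork3surfaces}) or in the special cases where it is known --- Kummer surfaces in characteristic $\geq 3$ via \cite{shioda}, and arbitrary K3 surfaces in characteristic $2$. Your treatment is therefore the right one: you prove the direction that is actually a theorem, and you are explicit that the converse is the genuine open content. Your argument for ``unirational $\Rightarrow$ supersingular'' is correct and is essentially Shioda's (\cite{shioda2}, Corollary 2, p.~235, which is exactly what the paper cites for this implication): resolve the indeterminacy, use $\widetilde{\phi}_{*}\circ\widetilde{\phi}^{*}=\deg\widetilde{\phi}$ on $\mathrm{H}^2_{\text{cris}}(X/W)\otimes K$ (the degree is invertible precisely because you have tensored with $K$, so divisibility by $p$ is harmless), and observe that $\mathrm{H}^2$ of a smooth projective rational surface is spanned by divisor classes and hence pure of slope $1$; combined with $\mathrm{H}^1=\mathrm{H}^3=0$ and $\mathrm{H}^4$ pure of slope $2$, this gives supersingularity in the sense of Definition~\ref{supersingularitydefinition}. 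For the converse, your program (Tate conjecture to get $\rho=22$, extract an elliptic fibration from the N\'eron--Severi lattice, exploit the infinite height of the formal Brauer group to construct a purely inseparable dominant cover, stratify by the Artin invariant) is the standard Artin--Liedtke strategy; note that the proof in \cite{liedtke}, which appears in the paper's bibliography, is now known to contain a gap in the moving-torsor step, so your decision to leave this direction open rather than treat it as established is the correct call and is consistent with the paper's own stance of stating the equivalence only as a conjecture.
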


This conjecture holds for Kummer surfaces when $p\ge 3$ and for any K3 surfaces when 
$p=2$.  Note that any Kummer surface has an elliptic fibration.  
Motivated by Schr\"oer's examples, we make the following definition for K3-fibrations.

\begin{definition}\label{definitionK3fibration}
Let $X$ be a smooth and projective threefold over a field $k$.  
We say that $\pi: X\to \mathbb{P}_k^1$ is a \emph{K3-fibration}
if $\pi$ is smooth, proper and for every $s\in \mathbb{P}_k^1$ the geometric
fiber $X_{\overline{\kappa(s)}}$ is a K3 surface.
\end{definition}

The part that any two points on the K3 surface $X$ can be joint by a chain of rational curves in Theorem~\ref{openK3} is
about the rational-chain-connectivity of $X$.  In general, let $X$ be a variety over a field $k$.  
Recall that a \emph{tree of rational curves $C$} on $X$ is a $k$-morphism 
$u_C: C\to X$ where $C$ is a proper, geometrically connected curve of arithmetic genus zero, defined over a field extension $k_C$ of $k$,
and every irreducible component of $C_{\overline{k_C}}$ is rational.   We say that two distinct points
$x_1$ and $x_2$ on $X$, not necessarily closed, are connected by the tree of rational curve $C$ 
if they are in the imgae of $C_{\overline{k_C}}\to X_{\overline{k}}\to X$.
Moreover, we say that $C$ is a \emph{rational curve} if $C_{\overline{k_C}}$ is irreducible and rational.

\begin{definition}\label{rationallychainconnecteddefinition}
Let $X$ be a variety over a field $k$.  
We say that $X$ is \emph{rationally chain connected (resp. rationally connected)} if every pair of distinct general 
points $x_1$, $x_2$ on $X$, including non-closed points, are connected by a tree
of rational curves (resp. by a rational curve).
\end{definition}

Note that Definition~\ref{rationallychainconnecteddefinition} agrees with Definition (3.2)
in \cite{Kollar}, p.199, since we include non-closed points in the definition. 
When $k$ is an uncountable algebraically closed field, rational-chain-connectivity (resp. rational-connectivity)
 is equivalent to that every pair of two distinct very general closed points could be connected by a connected tree of 
rational curves (resp. by a rational curve), see \cite{Kollar}, Proposition (3.6), p.201.  

Now, we can state our main theorem.

\begin{theorem}\label{maintheorem}
Fix an algebraically closed field $k$ with $\chara k\ge 3$.
Let $X$ be a smooth, projective threefold over $k$ with a K3-fibration.  
Denote by $k(t)$ the function field of $\mathbb{P}_k^1$.
Suppose that $X_{k(t)}$ is a Kummer surface.  
Then, the following are equivalent:
\begin{enumerate}
\item{the slopes of \(\mathrm{H}^{2}_{\text{rig}}(X/K)\) are all \(1\),}
\item{$X$ is supersingular,}
\item{$X$ is rationally chain connected.}
\end{enumerate}
In particular, if $X$ is supersingular, then $X$ is uniruled.
\end{theorem}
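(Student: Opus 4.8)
The plan is to establish the cycle of implications $(1)\Rightarrow(2)\Rightarrow(3)$ together with $(3)\Rightarrow(1)$, leveraging the K3-fibration structure $\pi\colon X\to\mathbb{P}^1_k$ and the hypothesis that the generic fiber $X_{k(t)}$ is a Kummer surface. First I would unwind the equivalence $(1)\Leftrightarrow(2)$. By Definition~\ref{supersingularitydefinition}, supersingularity means that the slope $[0,1)$ part of $\mathrm{H}^i_{\mathrm{rig}}(X/K)$ vanishes for all $i>0$, whereas $(1)$ is precisely the $\mathrm{H}^2$-supersingularity statement. The implication $(2)\Rightarrow(1)$ is immediate. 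For $(1)\Rightarrow(2)$ the key input is the fibration: I would use the Leray spectral sequence (or rather its crystalline/rigid analogue) for $\pi$ to express $\mathrm{H}^i_{\mathrm{rig}}(X/K)$ in terms of the cohomology of the base $\mathbb{P}^1$ with coefficients in the higher direct images $R^j\pi_*$, whose fibers are the cohomologies of the K3 surfaces. Since a K3 surface has $\mathrm{H}^1=\mathrm{H}^3=0$ and $\omega$ trivial, and $\mathbb{P}^1$ contributes only $\mathrm{H}^0$ and $\mathrm{H}^2$ of Tate type (slope $1$), the slopes of $\mathrm{H}^i(X)$ for $i\neq 2$ are controlled by the slopes of $\mathrm{H}^2$ of the fibers. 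Thus $\mathrm{H}^2$-supersingularity of $X$, which is tied to $\mathrm{H}^2$-supersingularity of the fibers, propagates to the vanishing of the slope-$[0,1)$ part in all degrees.

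Next I would prove $(1)\Rightarrow(3)$, which I expect to be the geometric heart of the argument. The generic fiber $X_{k(t)}$ is a Kummer surface, hence admits an elliptic fibration, and $\mathrm{H}^2$-supersingularity of $X$ should force the geometric generic fiber to be a supersingular K3 surface. By Theorem~\ref{openK3} applied over the (algebraically closed) base field of the geometric generic fiber, supersingularity of this Kummer/elliptic K3 implies it is rationally chain connected; equivalently, via Conjecture~\ref{artinconjecture} (known for Kummer surfaces in $p\geq 3$, which is exactly our hypothesis $\chara k\geq 3$), the fiber is unirational. The plan is then to spread this rational-chain-connectivity of the generic fiber out over $\mathbb{P}^1$: the base $\mathbb{P}^1$ is itself rational, so any two points of $X$ can be joined by first moving within a fiber using the rational curves supplied by supersingularity of the fibers, and then moving between fibers along a rational multisection or along curves dominating the rational base. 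Combining the fiberwise rational chains with the rationality of the base yields rational chains connecting arbitrary pairs of points on $X$, giving rational chain connectivity and in particular uniruledness.

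For the reverse direction $(3)\Rightarrow(1)$ I would argue contrapositively using Hodge-theoretic slope constraints: if $X$ is rationally chain connected, then the transcendental part of $\mathrm{H}^2$ must be trivial in the appropriate sense, forcing all slopes of $\mathrm{H}^2_{\mathrm{rig}}(X/K)$ to equal $1$. Concretely, rational chain connectivity implies that $\mathrm{H}^i(X,\mathcal{O}_X)$ vanishes for $i>0$ (a rationally chain connected smooth projective variety has no global differential forms of positive degree), which by the slope spectral sequence relating $\mathrm{H}^*_{\mathrm{rig}}$ to Hodge cohomology forces the slope-$[0,1)$ part of $\mathrm{H}^2$ to vanish, i.e. all slopes are $1$. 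The main obstacle I anticipate is making the "spreading out" in $(1)\Rightarrow(3)$ rigorous over the non-closed generic point: one must pass carefully between the rational chain connectivity of the geometric generic fiber and an honest family of rational curves on $X$ defined over $k$, controlling how these curves specialize to possibly non-generic fibers and ensuring the connecting curves can be chosen to dominate $\mathbb{P}^1$. Handling the specialization of rational chains, together with verifying that the known cases of the Artin--Shioda conjecture genuinely apply to $X_{k(t)}$ base-changed to the algebraic closure, is where the delicate work lies.
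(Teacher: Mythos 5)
Your architecture for $(1)\Rightarrow(3)$ matches the paper's: show the geometric fibers are supersingular, invoke Shioda's unirationality of supersingular Kummer surfaces in characteristic $\ge 3$ for the generic fiber, and spread out over the rational base using a section. (The paper makes the two steps you flag as ``delicate'' precise via Lemma~\ref{org212707e} and Proposition~\ref{supersingularfibration} --- triviality of the convergent F-isocrystals $\mathcal{E}^i$ on $\mathbb{P}^1_k$ and degeneration of the Leray spectral sequence, which is also essentially your sketch of $(1)\Rightarrow(2)$ --- and via the Hilbert-scheme properness argument of Proposition~\ref{rationallyconnectedgeometricfiber} and Corollary~\ref{rationallyconnectedgenericfiber}.) One small improvement you could make there: you do not need the conjectural Theorem~\ref{openK3} or Conjecture~\ref{artinconjecture} for the fiber; Shioda's theorem for Kummer surfaces is unconditional and is what the hypothesis ``$X_{k(t)}$ is a Kummer surface'' is for.

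However, your proposed proof of $(3)\Rightarrow(1)$ has a genuine gap: the claim that a rationally chain connected smooth projective variety has $\mathrm{H}^i(X,\mathcal{O}_X)=0$ for $i>0$ (no global forms of positive degree) is \emph{false} in positive characteristic. The standard counterexamples sit inside this very paper: a supersingular Kummer K3 surface in characteristic $p\ge 3$ is unirational (Shioda), hence rationally connected and a fortiori rationally chain connected, yet $\mathrm{H}^2(X,\mathcal{O}_X)\cong k\neq 0$ and $\mathrm{H}^0(X,\Omega^2_X)\neq 0$. So the Hodge-theoretic route you propose cannot start. The correct mechanism --- and the one the paper uses --- is through zero-cycles: rational chain connectivity gives $A_0(X_{\overline{K(X)}})\cong\mathbb{Z}$ (Theorem~\ref{weakrationallychainconnectedK3}), and Esnault's decomposition-of-the-diagonal theorem (Theorem~\ref{chowzero}) then kills the slope-$[0,1)$ part of $\mathrm{H}^i_{\text{rig}}(X/K)$ for \emph{all} $i>0$, which yields $(3)\Rightarrow(2)\Rightarrow(1)$ at once and closes the cycle of implications. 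Note this also repairs a structural weakness of your plan: your $(1)\Rightarrow(2)$ via Leray is workable but unnecessary once $(3)\Rightarrow(2)$ is obtained from Esnault, since $(2)\Rightarrow(1)$ is trivial and you already have $(1)\Rightarrow(3)$.
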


\subsection{Outline of the article}

In Section~\ref{rationallychainconnectedK3surfaces},
we show that rationally chain connected K3 surfaces are unirational if we assume Conjecture~\ref{artinconjecture} (Corollary~\ref{equivalencefork3surfaces}).
In Section~\ref{examplesunirational}, we go over the constructions of Calabi-Yau threefolds in 
Schr\"oer's article \cite{stefan} and show that they are all unirational (Theorem~\ref{unirationalp3} and Theorem~\ref{unirational2}).  
Section~\ref{rationallyconnectedfibrations} shows that it is sufficient to assume the rational-connectivity 
of the generic fiber of a fibration to obtain rational-chain-connectivity of other fibers (Proposition~\ref{rationallyconnectedgeometricfiber}).  
Section~\ref{technicalsection}
is the technical part about crystalline cohomology, which is used to show that every K3 fiber will be 
supersingular if we assume the total space is supersingular (Proposition~\ref{supersingularfibration}).  
Finally, in Section~\ref{proofofthemaintheorem},
we give the proof of Theorem~\ref{maintheorem}.

\mni
\noindent{\bf Acknowledgement:}  The author is very grateful to his advisor Prof. Jason Michael Starr for his consistent support during the proof.
The author really appreciates the help of Dingxin Zhang in the proof about crystalline cohomology.

\section{Rationally chain connected K3 surfaces}\label{rationallychainconnectedK3surfaces}

The definition of rational-chain-connectivity directly leads to following result.

\begin{theorem}(\cite{Kollar}, Theorem (3.13.1), p.206)\label{weakrationallychainconnectedK3}
Let $X$ be a rationally chain connected variety over an algebraically
closed field $k$.  Then, $A_0(X)\simeq \mathbb{Z}$.
\end{theorem}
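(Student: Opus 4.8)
The plan is to prove that the degree homomorphism realizes the claimed isomorphism. Since $k$ is algebraically closed, every closed point of $X$ has residue field $k$, so there is a well-defined degree map $\deg\colon A_0(X)\to\mathbb{Z}$ sending a zero-cycle $\sum_i n_i[x_i]$ supported on closed points to $\sum_i n_i$. Because $X$ is proper this map descends to rational equivalence classes---the degree of the divisor of a rational function on a complete integral curve in $X$ is zero---and it is surjective as $X$ is nonempty. It therefore remains to prove injectivity, i.e.\ that every degree-zero cycle is rationally equivalent to zero. Writing such a cycle as a $\mathbb{Z}$-linear combination of differences $[x]-[y]$ of closed points, it suffices to show that any two closed points $x,y\in X(k)$ satisfy $[x]=[y]$ in $A_0(X)$.

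The engine is that points on a rational curve are rationally equivalent: for a morphism $f\colon\mathbb{P}^1\to X$, the isomorphism $A_0(\mathbb{P}^1)\simeq\mathbb{Z}$ given by degree forces $[a]=[b]$ for any two points $a,b\in\mathbb{P}^1$, and proper pushforward $f_\ast$ yields $[f(a)]=[f(b)]$ in $A_0(X)$. Given a tree of rational curves $C=\bigcup_j C_j$ connecting $x$ and $y$ as in Definition~\ref{rationallychainconnecteddefinition}, I normalize each component to obtain maps $\mathbb{P}^1\to X$; since the dual graph of $C$ is connected, I can walk from the component carrying $x$ to the one carrying $y$, using at each node that the shared point lies on both adjacent components, and conclude $[x]=[y]$ by transitivity. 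The field-of-definition subtlety dissolves here: because $x$ and $y$ are $k$-points and $k=\overline{k}$, the parameter scheme of connecting trees is a nonempty finite-type $k$-scheme and hence has a $k$-point, so the tree may be taken over $k$ itself and the equivalence holds genuinely in $A_0(X)$.

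The remaining, and main, obstacle is to upgrade the hypothesis---that a \emph{general} pair of points is connected by a tree---to the statement that \emph{every} pair of closed points lies in a single rational equivalence class; this is essential, since $A_0(X)$ is generated by all closed points and special points are a priori not controlled by generic ones. The key input is that, in contrast to rational connectivity by a single curve, rational chain connectivity propagates from general pairs to all pairs: chains of rational curves can be enlarged and specialized, so the locus of pairs $(x,y)$ joined by such a tree is a closed subset of $X\times X$ containing a dense open, hence all of $X\times X$. I would establish this via properness of the parameter space of the relevant trees together with a specialization argument for the associated evaluation morphism. Combining this with the previous paragraph shows that any two closed points are rationally equivalent, so $\ker(\deg)=0$ and $A_0(X)\simeq\mathbb{Z}$.
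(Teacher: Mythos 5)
The paper offers no proof of this theorem---it is imported verbatim from Koll\'ar, Theorem IV.3.13.1---so the only comparison available is with the standard argument in that reference, and your proposal follows it essentially step for step: the degree map realizes the isomorphism, points on a connected tree of rational curves are rationally equivalent via pushforward from the normalizations of the components, and properness of a parameter space of chains upgrades ``general pairs are joined'' to ``all pairs are joined.'' Two points deserve care. First, properness of $X$ is needed both for the degree map to descend to $A_0(X)$ and for the statement to be true at all (e.g.\ $A_0(\mathbb{A}^n)=0$); it is implicit in Koll\'ar's definition of rational chain connectedness, and you correctly assume it. Second, and more substantively, your assertion that ``the locus of pairs joined by such a tree is a closed subset of $X\times X$'' is not literally correct as stated: that locus is the union over all degrees $d$ of the images of the evaluation maps from the proper parameter spaces of trees of degree at most $d$, i.e.\ a countable increasing union of closed sets, and such a union containing a dense open subset need not be all of $X\times X$. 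The repair is supplied by Definition~\ref{rationallychainconnecteddefinition} itself, which admits non-closed points: the generic point of $X\times X$ is a general pair, hence is joined by a tree of some fixed degree $d_0$ over an extension of $k(X\times X)$; the image of the degree-$\le d_0$ evaluation map is then a closed subset containing the generic point, hence equals $X\times X$. Once the degree is bounded, your Nullstellensatz step produces a connecting tree over $k$ for any two closed points, and the argument closes correctly.
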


There are several notions of supersingularity for K3 surfaces in literatures which are all equivalent.

\begin{definition}
Let $X$ be a K3 surface over an algebraically closed field.  We say that
$X$ is \emph{Shioda-supersingular} if its Picard rank is 22.
And $X$ is called \emph{Artin-supersingular} if the height of its Artin-Mazur 
formal Brauer group is infinity, that is, $h(\widehat{Br}(X))=\infty$.
\end{definition}

The following result characterizes Shioda-supersingularity and Artin-supersingularity
in terms of the slopes of crystalline cohomology.

\begin{theorem}(\cite{liedtke}, Theorem 2.3)\label{supersingularK3}
For a K3 surface $X$ over an algebraically closed field $k$ in odd characteristic.  
Denote by $W=W(k)$ the ring of Witt vectors of $k$.  Then, the following are equivalent:
\begin{enumerate}[label=(\roman*)]
\item{$X$ is Shioda-supersingular.}
\item{$X$ is Artin-supersingular.}
\item{For all $i$, the F-crystal $\mathrm{H}^i_{\text{cris}}(X/W)$ is of slope $i/2$.}
\item{The slope $[0,1)$ part of $\mathrm{H}^2_{\text{cris}}(X/W)$ is vanishing.}
\end{enumerate}
\end{theorem}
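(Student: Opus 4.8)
The plan is to prove the cycle of equivalences by isolating the only nontrivial piece of cohomology and then feeding in Artin--Mazur theory and the Tate conjecture. First I would record the Betti numbers of a K3 surface: $b_0=b_4=1$, $b_1=b_3=0$, and $b_2=22$. Consequently $\mathrm{H}^0_{\text{cris}}(X/W)$ and $\mathrm{H}^4_{\text{cris}}(X/W)$ are free of rank one with Frobenius acting by $1$ and by $p^2$, hence of slopes $0$ and $2$, while the odd cohomology vanishes. Thus condition (iii) has content only in degree $2$, where it asserts precisely that the F-crystal $\mathrm{H}^2_{\text{cris}}(X/W)$ is pure of slope $1$. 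Poincar\'e duality forces the Newton slopes of $\mathrm{H}^2$ to be symmetric about $1$, so the slope $[0,1)$ part vanishes if and only if the slope $(1,2]$ part vanishes; since the total rank is $22$, either vanishing forces every slope to equal $1$. This gives (iii) $\Leftrightarrow$ (iv) immediately.

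For (ii) $\Leftrightarrow$ (iv) I would invoke Artin--Mazur theory: the formal Brauer group $\widehat{Br}(X)$ is a one-dimensional formal group whose Cartier--Dieudonn\'e module is canonically related, up to a Frobenius twist, to the slope $[0,1)$ subcrystal of $\mathrm{H}^2_{\text{cris}}(X/W)$. The height equals the rank of that subcrystal when it is finite, and is infinite exactly when the subcrystal is zero. Hence $h(\widehat{Br}(X))=\infty$ if and only if the slope $[0,1)$ part of $\mathrm{H}^2$ vanishes, which is condition (iv).

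The implication (i) $\Rightarrow$ (iv) is the easy half of the comparison with the N\'eron--Severi lattice. The crystalline first Chern class is injective modulo torsion and extends to an inclusion $\operatorname{NS}(X)\otimes_{\mathbb{Z}}K \hookrightarrow \mathrm{H}^2_{\text{cris}}(X/W)\otimes_W K$ (injectivity following from nondegeneracy of the intersection form), whose image lies in the subspace on which Frobenius acts as multiplication by $p$, i.e.\ the slope $1$ part. If the Picard rank is $22=b_2$, then this slope $1$ part has dimension at least $22$ and therefore is all of $\mathrm{H}^2$, giving (iv).

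The hard direction, and the main obstacle, is (iv) $\Rightarrow$ (i): the Tate conjecture for supersingular K3 surfaces. Assuming all slopes of $\mathrm{H}^2$ equal $1$, one must show that the resulting $22$-dimensional space of slope $1$ classes is spanned by algebraic cycles, forcing $\rho=22$. I would deduce this by spreading $X$ out to a smooth family over a finitely generated base, specializing to a K3 surface over a finite field where the Picard rank can only drop, and then applying the crystalline Tate conjecture for K3 surfaces in odd characteristic (the supersingular case going back to Nygaard--Ogus). The entire difficulty lies in producing enough algebraic classes to saturate the slope $1$ part; by contrast, the cohomological bookkeeping in the remaining implications is formal.
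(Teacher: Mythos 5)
The paper offers no proof of this statement: it is quoted directly from \cite{liedtke}, Theorem 2.3, so there is nothing internal to compare your argument against. Judged on its own, your treatment of the ``formal'' implications is correct and standard: the Betti-number bookkeeping reducing (iii) to purity of $\mathrm{H}^2$ in slope $1$, the symmetry of the Newton polygon of $\mathrm{H}^2$ about $1$ via Poincar\'e duality (giving (iii) $\Leftrightarrow$ (iv)), the identification of the Cartier--Dieudonn\'e module of $\widehat{Br}(X)$ with the slope $[0,1)$ part of $\mathrm{H}^2_{\text{cris}}(X/W)$ (Artin--Mazur/Illusie, giving (ii) $\Leftrightarrow$ (iv)), and the injection of $\operatorname{NS}(X)\otimes K$ into the slope-$1$ part (giving (i) $\Rightarrow$ (iv)) all go through as you describe.

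The gap is in (iv) $\Rightarrow$ (i). First, the attribution is off in a way that matters: Nygaard and Nygaard--Ogus proved the Tate conjecture for K3 surfaces of \emph{finite} height; the supersingular (infinite height) case in odd characteristic is due to the much later work of Charles, Madapusi Pera, and Maulik, and the theorem as stated depends on it. Second, and more seriously, your specialization step runs backwards. Under specialization the geometric Picard number can only \emph{increase} (the specialization map on N\'eron--Severi groups is injective), so establishing $\rho=22$ for a finite-field specialization $X_0$ yields only $\rho(X)\le \rho(X_0)=22$, which is vacuous; it does not transfer the conclusion back to $X$. To close the argument one needs an additional input --- for instance Artin's theorem that the Picard number is constant in connected families of Artin-supersingular K3 surfaces, or a form of the Tate conjecture applied directly over the finitely generated field of definition of $X$. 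As written, the final and only hard step of the proof does not go through.
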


Therefore, we just say that a K3 surface $X$ is \emph{supersingular} if it satisfies one of
the conditions in Theorem~\ref{supersingularK3}.  

Fix a perfect field $k$ with characteristic $p>0$.   We cite the following result about vanishing of slope $[0,1)$
part of rigid cohomology.

\begin{theorem}(\cite{Esnault}, Theorem 1.1, p.188)\label{chowzero}
Let $X$ be a smooth projective variety over a perfect field $k$ of characteristic $p>0$.
Let $K(X)$ be the function field of $X$.
If the Chow group of 0-cycels $A_0(X\times_k \overline{K(X)})$ is equal to $\mathbb{Z}$,
then the slope $[0,1)$ part of $\mathrm{H}^i_{\text{rig}}(X/K)$ is vanishing for $i>0$.
\end{theorem}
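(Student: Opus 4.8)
The plan is to deduce the vanishing of the slope $[0,1)$ part from a Bloch--Srinivas decomposition of the diagonal, by letting the resulting self-correspondence of $X$ act on the $F$-isocrystal $\mathrm{H}^i_{\text{rig}}(X/K)$ and exploiting two features of this action: algebraic correspondences are morphisms of $F$-isocrystals, hence preserve the slope filtration, and a Gysin pushforward from a smooth subvariety of codimension $c$ raises all slopes by $c$. Throughout we use that for smooth projective $X$ over the perfect field $k$ rigid and crystalline cohomology agree, so that Poincar\'e duality, the cycle class map, the projection formula and Gysin maps are all available and compatible with Frobenius.

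First I would produce the decomposition of the diagonal. Over $\overline{K(X)}$ the diagonal point $\delta$ of $X_{K(X)}$ and a constant point $x_0$ are both zero-cycles of degree one, so the hypothesis $A_0(X_{\overline{K(X)}})\cong\mathbb{Z}$ gives $[\delta]=[x_0]$ there. Since this rational equivalence is already defined over a finite subextension $L$ of $\overline{K(X)}/K(X)$, pushing forward along $X_L\to X_{K(X)}$ and using $\pi_*\pi^*=[L:K(X)]$ introduces a positive integer $N$ with $N[\delta]=N[x_0]$ in $\mathrm{CH}_0(X_{K(X)})$. Passing to the limit over dense opens $U\subseteq X$ in the first factor, the localization sequence for $\mathrm{CH}^n(X\times X)$ then yields a relation
\[
N\,[\Delta_X] = N\,[X\times x_0] + [Z],
\]
where $Z$ is supported on $Y\times X$ for some closed $Y\subsetneq X$ with $\codim_X Y\ge 1$.

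Next I would let this relation act on $\mathrm{H}^i_{\text{rig}}(X/K)$ by the contravariant correspondence action $\gamma\mapsto\gamma^{*}$. One has $[\Delta_X]^{*}=\mathrm{id}$, while $[X\times x_0]^{*}$ kills $\mathrm{H}^i$ for every $i>0$ because cupping with the top-degree class $[x_0]$ annihilates positive-degree classes. Hence for all $i>0$ we obtain $N\cdot\mathrm{id}=Z^{*}$ on $\mathrm{H}^i_{\text{rig}}(X/K)$. The crucial point is that, since $Z$ is supported on $Y\times X$ with $Y$ of codimension $\ge 1$, the projection formula exhibits $Z^{*}$ as a Gysin pushforward into the first factor. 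Choosing a de Jong alteration $b\colon\widetilde Y\to Y\hookrightarrow X$ with $\widetilde Y$ smooth projective of dimension $d\le n-1$, the map $Z^{*}$ factors through $b_{*}$ applied to a correspondence-pullback, so that $\operatorname{Im}(Z^{*})\subseteq b_{*}\mathrm{H}^{*}_{\text{rig}}(\widetilde Y/K)$. As $b$ has codimension $c=n-d\ge 1$, the Gysin map is a morphism $\mathrm{H}^{*}_{\text{rig}}(\widetilde Y/K)(-c)\to\mathrm{H}^{*}_{\text{rig}}(X/K)$ of $F$-isocrystals, and since the slopes of $\mathrm{H}^{*}_{\text{rig}}(\widetilde Y/K)$ are $\ge 0$, its image is a sub-$F$-isocrystal all of whose slopes are $\ge c\ge 1$.

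Finally I would conclude. Denote by $\mathrm{H}^{<1}\subseteq\mathrm{H}^i_{\text{rig}}(X/K)$ the slope $[0,1)$ step of the slope filtration. Restricting $N\cdot\mathrm{id}=Z^{*}$ to $\mathrm{H}^{<1}$ gives $N\cdot\mathrm{H}^{<1}=Z^{*}(\mathrm{H}^{<1})\subseteq\operatorname{Im}(Z^{*})$, which is an isocrystal with all slopes $\ge 1$; but $N\cdot\mathrm{H}^{<1}\cong\mathrm{H}^{<1}$ has all slopes $<1$, so it must vanish. For the top degree $i=2n$ there is nothing to prove, as $\mathrm{H}^{2n}_{\text{rig}}(X/K)$ is pure of slope $n\ge 1$. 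I expect the main obstacle to be bookkeeping the decomposition with the correct support --- so that the codimension of $Y$ is matched to the contravariant action and hence to a genuine slope shift --- together with justifying, for possibly singular $Y$ and in the rigid setting, that the correspondence truly factors through a Gysin map raising slopes; this is exactly where de Jong alterations, Poincar\'e duality and the Frobenius-compatibility of Gysin pushforwards enter.
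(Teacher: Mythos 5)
The paper does not prove this statement; it is quoted verbatim from Esnault's Theorem~1.1, and your argument is essentially Esnault's own proof: a Bloch--Srinivas decomposition $N[\Delta_X]=N[X\times x_0]+[Z]$ with $Z$ supported over a proper closed subset, the contravariant correspondence action on $\mathrm{H}^i_{\text{rig}}$, de Jong alterations to smooth the support, and the fact that the Gysin pushforward is a Tate-twisted morphism of $F$-isocrystals whose image has slopes $\ge 1$. Your bookkeeping is correct (in particular $Z$ is supported on $Y\times X$ in the factor matching the action $\gamma^{*}=p_{1*}(\gamma\cdot p_2^{*}(-))$), so the proposal is a sound reconstruction of the cited proof.
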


As a direct consequence, we have the following theorem.

\begin{theorem}\label{chaingivessupersingular}
Let $X$ be a rationally chain connected K3 surface over an algebraically closed field of
characteristic $p\ge 3$.  Then, $X$ is supersingular.
\end{theorem}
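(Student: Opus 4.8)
The plan is to assemble the statement from the three results already cited in this section. Rational chain connectedness will feed Theorem~\ref{weakrationallychainconnectedK3} to give triviality of the Chow group of $0$-cycles; that triviality will feed Theorem~\ref{chowzero} to kill the slope $[0,1)$ part of rigid cohomology; and for a K3 surface in odd characteristic this vanishing is one of the equivalent formulations of supersingularity recorded in Theorem~\ref{supersingularK3}. The one point requiring care is that Esnault's theorem is phrased in terms of $A_0\bigl(X\times_k\overline{K(X)}\bigr)$ rather than $A_0(X)$, so I must produce the triviality of the Chow group after the base change $-\times_k \overline{K(X)}$, not merely over $k$.

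First I would record that rational chain connectedness is a geometric property, so that $X$ being rationally chain connected over the algebraically closed field $k$ forces $X_{\overline{K(X)}}$ to be rationally chain connected over $\overline{K(X)}$. The mechanism I would use is the connecting-family description underlying Kollár's Proposition~(3.6): $X$ is rationally chain connected exactly when there is a family of trees of rational curves on $X$ whose two-pointed evaluation morphism to $X\times_k X$ is surjective. Surjectivity of a morphism is preserved under arbitrary base change, so applying $-\times_k \overline{K(X)}$ keeps the evaluation morphism surjective; hence every pair of points of $X_{\overline{K(X)}}$, including the new generic ones introduced by enlarging the field, is joined by a member of the (base-changed) family. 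Granting this, Theorem~\ref{weakrationallychainconnectedK3} applied over the algebraically closed field $\overline{K(X)}$ gives $A_0\bigl(X\times_k\overline{K(X)}\bigr)\simeq\mathbb{Z}$.

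This is precisely the hypothesis of Theorem~\ref{chowzero}, which then yields that the slope $[0,1)$ part of $\mathrm{H}^i_{\text{rig}}(X/K)$ vanishes for all $i>0$, in particular for $i=2$. Since $X$ is smooth and proper, $\mathrm{H}^2_{\text{rig}}(X/K)$ coincides with $\mathrm{H}^2_{\text{cris}}(X/W)$, so the slope $[0,1)$ part of $\mathrm{H}^2_{\text{cris}}(X/W)$ vanishes. As $X$ is a K3 surface and $\chara k=p\ge 3$ is odd, this is condition~(iv) of Theorem~\ref{supersingularK3}, and by its equivalence with the remaining conditions $X$ is supersingular; note also that this is already the slope condition in Definition~\ref{supersingularitydefinition}, since for a K3 surface the cohomology in degrees $i\neq 2$ contributes nothing to the slope $[0,1)$ range.

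I expect the genuine obstacle to be the second paragraph, namely checking that the connecting family over $k$ continues to connect the ``more generic'' points that appear over $\overline{K(X)}$, i.e.\ that rational chain connectedness is stable under the extension $k\subseteq \overline{K(X)}$ of algebraically closed fields. Once this geometric input is in hand, the rest is a formal chain of citations. The only place the hypothesis $p\ge 3$ is used is Theorem~\ref{supersingularK3}; both Theorem~\ref{weakrationallychainconnectedK3} and Theorem~\ref{chowzero} hold in every positive characteristic.
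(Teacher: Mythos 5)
Your proposal is correct and follows essentially the same route as the paper's own proof: base-change rational chain connectedness to $\overline{K(X)}$, apply Theorem~\ref{weakrationallychainconnectedK3} to get $A_0\bigl(X\times_k\overline{K(X)}\bigr)\simeq\mathbb{Z}$, feed that into Theorem~\ref{chowzero}, and conclude via the slope characterization of supersingularity. You simply spell out two steps the paper leaves implicit (the stability of rational chain connectedness under extension of algebraically closed fields, and the role of $p\ge 3$ via Theorem~\ref{supersingularK3}(iv)).
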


\begin{proof}
Since $X$ is rationally chain connected, $X\times_k\overline{K(X)}$ is rationally 
chain connected.  Thus, the result follows from Theorem~\ref{chowzero} and Theorem~\ref{weakrationallychainconnectedK3}
since the rigid cohomology and crystalline cohomology agree on $X$.
\end{proof}

We note that, when $p=2$, a K3 surface is supersingular if and only if it is unirational (combine the results in \cite{sharv},
\cite{sharv2}, \cite{artin} and \cite{milne}).  It is known that unirational surfaces are supersingular (\cite{shioda2}, Corollary 2, p.235).  However, 
Theorem~\ref{chaingivessupersingular} shows that a much weaker condition is sufficient to obtain
the supersingularity for K3 surfaces, i.e., rational-chain-connectivity.  
This leads us to consider the equivalence of supersingularity and rational-chain-connectivity for higher
dimensional varieties, for example, threefolds.  As we will see in Section~\ref{examplesunirational},
this is true for the Calabi-Yau threefolds constructed in \cite{stefan} since they are unirational.

As a corollary of Theorem~\ref{chaingivessupersingular}, 
we have the following result.

\begin{corollary}\label{equivalencefork3surfaces}
Let $X$ be a K3 surface over an algebraically closed field $k$ of characteristic $p\ge 3$.
Assume that Conjecture~\ref{artinconjecture} is ture.  
Then, the following are equivalent:
\begin{itemize}
\item{$X$ is supersingular;}
\item{$X$ is unirational;}
\item{$X$ is rationally connected;}
\item{$X$ is rationally chain connected.}
\end{itemize}
In particular, these conditions are all equivalent if $X$ is a Kummer surface.
\end{corollary}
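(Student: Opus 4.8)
The plan is to prove the four conditions equivalent by closing a single cycle of implications. Writing (a) for supersingular, (b) for unirational, (c) for rationally connected, and (d) for rationally chain connected, it suffices to establish the cycle (a) $\Rightarrow$ (b) $\Rightarrow$ (c) $\Rightarrow$ (d) $\Rightarrow$ (a). Two of these links are already in hand: the implication (a) $\Rightarrow$ (b) is precisely the nontrivial direction of Conjecture~\ref{artinconjecture}, which we are assuming, and the implication (d) $\Rightarrow$ (a) is Theorem~\ref{chaingivessupersingular}, applicable because $p\ge 3$. Hence the work reduces to the two purely geometric links (b) $\Rightarrow$ (c) and (c) $\Rightarrow$ (d).

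The implication (c) $\Rightarrow$ (d) is immediate from Definition~\ref{rationallychainconnecteddefinition}, since a rational curve is a tree of rational curves with a single irreducible component; thus any pair of points joined by a rational curve is a fortiori joined by a tree. For (b) $\Rightarrow$ (c) I would run the classical argument. Fix a dominant rational map $f\colon \mathbb{P}_k^N \dashrightarrow X$ witnessing unirationality, with dense open domain of definition $U$. A general pair of points $x_1,x_2\in X$ lies in the image $f(U)$, so I choose preimages $y_1,y_2\in U$; the line $\ell\cong \mathbb{P}_k^1$ through $y_1$ and $y_2$ meets $U$ in a dense open subset, and since $X$ is projective the rational map $f|_\ell$ from the smooth curve $\ell$ extends to a morphism $\mathbb{P}_k^1\to X$ whose image is a rational curve through $x_1$ and $x_2$. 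This is exactly rational connectivity in the sense of Definition~\ref{rationallychainconnecteddefinition}, and with it the cycle closes, giving the equivalence of (a)--(d).

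For the final assertion I would appeal to the remark recorded after Conjecture~\ref{artinconjecture}: the conjecture is known for Kummer surfaces when $p\ge 3$. Thus for a Kummer surface the standing hypothesis is automatic, and (a)--(d) are equivalent without assuming Conjecture~\ref{artinconjecture}. Since the genuine mathematical content is imported from the cited results, the proof is essentially a formal assembly, and I expect no serious obstacle. The only point needing a little care is that Definition~\ref{rationallychainconnecteddefinition} permits non-closed general points; but as $k$ is algebraically closed and the definition agrees with Kollár's (as noted in the text), the version for the generic point follows from the closed-point version by a standard spreading-out argument, so this does not represent a real difficulty.
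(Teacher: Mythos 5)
Your proposal is correct and follows essentially the same route as the paper: the identical cycle of implications (rationally chain connected $\Rightarrow$ supersingular via Theorem~\ref{chaingivessupersingular}, $\Rightarrow$ unirational via Conjecture~\ref{artinconjecture}, $\Rightarrow$ rationally connected $\Rightarrow$ rationally chain connected), with the Kummer case disposed of by Shioda's theorem. The only difference is that you spell out the standard argument that unirationality implies rational connectivity, which the paper simply asserts.
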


\begin{proof}
Suppose that $X$ is rationally chain connected.  Then, by Theorem~\ref{chaingivessupersingular}, $X$ is supersingular.  
Thus, $X$ is unirational if Conjecture~\ref{artinconjecture} holds for $X$, so $X$ is rationally connected.  
Hence $X$ is rationally chain connected.  Moreover, Conjecture~\ref{artinconjecture} holds when $X$ is a Kummer surface and $\chara k\ge 3$ (\cite{shioda}, Theorem 1.1, p.154), 
so the conditions are all equivalent for Kummer surfaces.
\end{proof}

\section{Schr\"oer's examples}\label{examplesunirational}

In this section, we review the constructions of Schr\"oer's examples and show that they
are unirational.  Note that by construction these examples exist only in characteristics two and three.

Fix an algebraically closed field $k$ of characteristic $p>0$.
Let $A$ be a superspecial Abelian surface.  Denote $X'$ by the variety $A\times_k\mathbb{P}_k^1$.  
As explained in \cite{stefan}, fix an integer $n\ge 1$ and an exact sequence 
\[
0\rightarrow \mathcal{O}_{\mathbb{P}^1_k}(-n)\overset{r,s}{\xrightarrow{\hspace*{0.2cm}}} 
\mathcal{O}_{\mathbb{P}_k^1}^{\oplus 2}\rightarrow \mathcal{O}_{\mathbb{P}_k^1}(n)\rightarrow 0 \label{exactsequence} \tag{$\dag$}
\]
given by two homogeneous quadratic polynomials $r$, $s\in H^0(\mathbb{P}_k^1, \mathcal{O}_{\mathbb{P}_k^1}(n))$
without common zeros.  Let $H\subset X'$ be the relative radical subgroup scheme of height one whose
$p$-Lie algebra is isomorphic to $\mathcal{O}_{\mathbb{P}^1_k}(-n)\subset \mathcal{O}_{\mathbb{P}_k^1}^{\oplus 2}$ (\cite{stefan}, p.1583).
Then, the quotient $X=X'/H$ has $\omega_X=\mathcal{O}_X$ if $p=3$ and $n=1$, or $p=2$ and $n=2$ (\cite{stefan}, Corollary 3.2, p.1584).  
And the quotient $X$ is an Abelian scheme over $\mathbb{P}_k^1$, whose fibers $X_t$,
$t\in \mathbb{P}_k^1$, are supersingular Abelian surfaces.

\subsection{Characteristic three case}

Now, take $p=3$ and $n=1$ in \eqref{exactsequence} as in \cite{stefan}, Section 4, p.1584.  Construct $Z\to \mathbb{P}_k^1$
as the quotient of $X$ by the involution $[-1]$.  Denote by $f: X\to Z$ the quotient $\mathbb{P}_k^1$-morphism.
For every $t\in \mathbb{P}_k^1$, the fiber $Z_t$ is a singular Kummer surface
with 16 singular points that are rational double points of type $A_1$.  
Let $D\subset Z$ be the schematic closure of the union of singular points.
Then, the blowing up with center $D$ of $Z$, $Y\to Z$, gives a smooth Calabi-Yau threefold, 
and the geometric fibers $Y_{\overline{t}}$, $t\in\mathbb{P}_k^1$, are supersingular Kummer K3 surfaces (\cite{stefan}, Proposition 4.1 and
Proposition 4.2, p.1585).

\begin{theorem}\label{unirationalp3}
Keep the notations in \cite{stefan}, Section 4, p.1584.  The Calabi-Yau threefold $Y$
constructed above is unirational.
\end{theorem}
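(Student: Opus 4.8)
The plan is to reduce the statement to the unirationality of the singular quotient $Z$, and then to exploit the fact that the height-one quotient presenting $X$ collapses, after a single relative Frobenius, onto a \emph{constant} family of abelian surfaces over $\mathbb{P}^1_k$. Since $Y\to Z$ is the blow-up of $Z$ along $D$, it is proper birational, so $k(Y)=k(Z)$; as unirationality depends only on the function field, it suffices to show that $Z=X/[-1]$ is unirational. I will carry this out at the level of function fields, identifying $k(Z)$ with the $[-1]$-invariant subfield of $k(X)$.

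The heart of the argument is the following comparison. Because $H$ is radical of height one, it is annihilated by Frobenius and hence contained in the relative Frobenius kernel $\ker\bigl(F_{X'/\mathbb{P}^1_k}\bigr)$ of the smooth group scheme $X'=A\times_k\mathbb{P}^1_k$. Consequently the quotient $\pi\colon X'\to X=X'/H$ factors the relative Frobenius: there is a morphism $g\colon X\to X'^{(p)}$ with $g\circ\pi=F_{X'/\mathbb{P}^1_k}$, where $X'^{(p)}$ is again a \emph{constant} abelian scheme over $\mathbb{P}^1_k$, with fiber $A^{(p)}$. Thus $g$ is a purely inseparable dominant morphism of threefolds (of degree $p$). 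The involution $[-1]$ preserves $H$ (it acts by $-1$ on the $p$-Lie algebra and so preserves the line subbundle $\mathcal{O}_{\mathbb{P}^1_k}(-n)$), it commutes with relative Frobenius, and therefore $g$ is $[-1]$-equivariant. Passing to the $[-1]$-invariant subfields, I obtain a purely inseparable dominant morphism $\bar{g}\colon Z\to \mathrm{Km}^{\mathrm{sing}}(A^{(p)})\times_k\mathbb{P}^1_k$, where $\mathrm{Km}^{\mathrm{sing}}(A^{(p)})=A^{(p)}/[-1]$.

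Next I would invoke unirationality of the target. Since $A$ is superspecial, $A^{(p)}$ is again a supersingular abelian surface, so its Kummer K3 surface $\mathrm{Km}(A^{(p)})$ is a supersingular Kummer surface; as $\chara k=3\ge 3$, Corollary~\ref{equivalencefork3surfaces} shows it is unirational. Hence the smooth model $\mathrm{Km}(A^{(p)})\times_k\mathbb{P}^1_k$ of the target is unirational (a product of a unirational surface with $\mathbb{P}^1_k$), so its function field embeds into a purely transcendental extension $L=k(t_1,\dots,t_N)$. It then remains to transfer unirationality across the purely inseparable cover $\bar g$: writing $k\bigl(\mathrm{Km}(A^{(p)})\times\mathbb{P}^1_k\bigr)\hookrightarrow k(Z)$ for the inclusion induced by $\bar g$, pure inseparability gives $k(Z)^{p^m}\subseteq k\bigl(\mathrm{Km}(A^{(p)})\times\mathbb{P}^1_k\bigr)\subseteq L$ for some $m$, whence $k(Z)\subseteq L^{1/p^m}$. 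Because $k$ is perfect, $L^{1/p^m}=k\bigl(t_1^{1/p^m},\dots,t_N^{1/p^m}\bigr)$ is again purely transcendental over $k$, so $k(Z)$ embeds into a rational function field; thus $Z$, and therefore $Y$, is unirational.

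I expect the main obstacle to be the second step: making precise the Frobenius factorization of the height-one quotient, verifying that the resulting morphism $g$ lands in the \emph{constant} family $X'^{(p)}\cong A^{(p)}\times_k\mathbb{P}^1_k$, and checking its $[-1]$-equivariance so that it descends to the Kummer level. Once this purely inseparable comparison of $X$ with a constant family is in place, the remaining steps — unirationality of the supersingular Kummer fiber and the $p^m$-th root trick over the perfect field $k$ — are formal.
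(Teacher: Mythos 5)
Your argument is correct, but it runs in the opposite direction from the paper's. The paper exploits the map it already has: the quotient $X'\to X=X'/H$ is a dominant morphism \emph{from} the constant family $X'=A\times_k\mathbb{P}^1_k$, it is $[-1]$-equivariant, hence descends to a surjection $Z'\to Z$ and then to a dominant rational map $Y'\dashrightarrow Y$ from the constant family of Kummer K3 surfaces $Y'=\mathrm{Km}(A)\times_k\mathbb{P}^1_k$; Shioda's theorem makes $Y'$ unirational, and a dominant image of a unirational variety is unirational. You instead factor the relative Frobenius through the height-one quotient to produce a purely inseparable dominant morphism $g\colon X\to X'^{(p)}$ \emph{onto} a constant family, descend it to the Kummer level, and then recover unirationality of the source from unirationality of the target via the $p^m$-th root trick ($k(Z)^{p^m}\subseteq L$ implies $k(Z)\subseteq L^{1/p^m}$, which is again purely transcendental over the perfect field $k$). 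All the steps you flag as delicate do go through: height one gives $H\subseteq\ker(F_{X'/\mathbb{P}^1_k})$ so the factorization exists ($X'$ is commutative, so $H$ is normal), $X'^{(p)}\cong A^{(p)}\times_k\mathbb{P}^1_k$ is indeed constant with $A^{(p)}$ again superspecial, and $[-1]$ preserves $H$ and commutes with Frobenius. The trade-off: the paper's route is shorter and needs no inseparable-descent lemma, since the dominant map already points the right way; your route is the Shioda-style argument and costs you the Frobenius factorization plus the $p$-th-root lemma, though it has the small virtue of exhibiting $Z$ as a purely inseparable degree-$p$ cover of an explicit constant family. Both ultimately rest on the same input, the unirationality of supersingular Kummer surfaces in characteristic $\ge 3$.
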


\begin{proof}
Since $X'\to X$ is a homomorphism of group schemes, we have the commutative diagram
\[\xymatrix{
X'\ar[r]\ar[d]_{f'}  &  X\ar[d]^f  \\
Z'\ar[r]  &  Z 
}\]
where $Z'$ is the quotient of $X'$ by $[-1]$, and $Z'\to Z$ is surjective.
Note that $Z'$ is a constant family of singular Kummer surfaces.
Blow up the singular locus of $Z'$ and $Z$ to get $Y'$ and $Y$.  
There is a domaint rational map from $Y'$ to $Y$.  The smooth threefold $Y'$ is a constant 
family of Kummer K3 surfaces.  By \cite{shioda}, Theorem 1.1, p.154, the Kummer surface 
associated to a supersingular Abelian surface is unirational when $\chara k>2$.
Thus, there is a rational dominant map from $\mathbb{P}_k^2\times_k\mathbb{P}_k^1$
to $Y'$.  Since the variety $\mathbb{P}_k^2\times_k\mathbb{P}_k^1$ is rational, $Y'$ is unirational.
Therefore, $Y$ is unirational.
\end{proof}

\begin{remark}
By \cite{stefan}, Proposition 8.1, p.1590, the Artin-Mazur formal group
of the Calabi-Yau threefold constructed in \cite{stefan}, Section 4 and Section 7,
is isomorphic to the formal additive group $\hat{\mathbb{G}}_a$, whose height is
infinity.  Therefore, Theorem~\ref{unirationalp3} gives examples of $\mathrm{H}^3$-supersingular
Calabi-Yau threefolds that are unirational when $\chara k=3$.
Moreover, by Theorem~\ref{chowzero}, 
$Y$ is supersingular as defined in Definition~\ref{supersingularitydefinition}.
\end{remark}

\subsection{Characteristic two case}

Let $k$ be an algebraically closed field of characteristic 2.  Let $E$ be the supersingular elliptic curve
given by the Weierstrass equation $x^3=y^2+y$.  Take a primitive third root of unity $\zeta\in k$.
The automorphism $\varphi: E\to E$ via $(x, y)\mapsto (\zeta x, y)$ gives an action of $G=\mathbb{Z}/3\mathbb{Z}$
on $E$.  The action has three fixed points, $(0, 0)$, $(0, 1)$ and $\infty$.  Consider the supersingular
Abelian surface $A=E\times E$, endowed with the action of $G$ via $\phi=(\varphi, \varphi)$.
The morphsim $\phi: A\to A$ has $9$ fixed points, which correspondes to the singularities on $A/G$.
Note that $A/G$ is a proper normal surface over $k$.  

Let $X'$ be $A\times_k\mathbb{P}_k^1$.  Take $p=2$ and $n=2$ in \eqref{exactsequence}.  
The quotient $X=X'/H$ has $\omega_X\simeq \mathcal{O}_X$.
The fiberwise action of $G$ on $X'\to \mathbb{P}_k^1$ descends
to a fiberwise action on $X\to \mathbb{P}_k^1$.  Set $Z=X/G$ and let $Y\to Z$ be the minimal 
resolution of singularities.  Then, $Y$ is a Calabi-Yau threefold in characteristic two, and every 
geometric fiber $Y_{\overline{t}}$ of $Y\to \mathbb{P}_k^1$ has Picard number $\rho(Y_{\overline{t}})=22$.

\begin{theorem}\label{unirational2}
Keep the notations as above (\cite{stefan}, Section 7, p.1589).  The Calabi-Yau threefold $Y$
constructed above is unirational.
\end{theorem}

\begin{proof}
Let $x$ be the origin of $A$.  
Since $A$ is smooth, $\widehat{\mathcal{O}}_{A,x}$ is isomorphic
to $k[[u, v]]$ as $k$-algebras, where $u$, $v$ is a regular system of parameters of $\widehat{\mathcal{O}}_{A,x}$.
As remarked in Proposition 5.1, \cite{stefan}, p.1585, the action of $\phi$
on $\widehat{\mathcal{O}}_{A, x}$ is given by the multiplication by $\zeta$.  
So $\phi$ maps $u$ to $\zeta u$ and $v$ to $\zeta v$.
Thus, $\mathcal{O}_{A,x}/G$ is not Gorenstein.  Then, the corresponding singular point of $x$
on $A/G$ is not a rational double point since rational double point is Gorenstein. 
By \cite{kat}, Theorem 2.11, p.10, $A/G$ is a rational surface.
Then, the same argument as in Theorem~\ref{unirationalp3} shows that
the Calabi-Yau threefold $Y$ is unirational.
\end{proof}

\section{Rationally chain connected fibrations}\label{rationallyconnectedfibrations}

In this section, we prove that if the generic fiber of a fibration is coverd by
rational curves, then all other fibers are also covered by rational curves.

\begin{proposition}\label{rationallyconnectedgeometricfiber}
Let $\pi: X\to S$ be a projective morphism of irreducible Noetherian schemes.
Denote by $K$ the function field of $S$.  Assume that $X_{\overline{K}}$
is irreducible and rationally chain connected.  Then, for every $s\in S$, 
the geometric fiber $X_{\overline{\kappa(s)}}$ is also rationally chain connected.
\end{proposition}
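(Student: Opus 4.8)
The plan is to realize connecting chains as \emph{genus-zero stable maps} and to run a specialization argument using the properness of the Kontsevich moduli space over the base $S$. The decisive feature of this space --- as opposed to a naive Chow or Hilbert scheme of curves --- is that a limit of genus-zero stable maps is again a genus-zero stable map, so the \emph{source} always remains a tree of $\mathbb{P}^1$'s. This is exactly what prevents the classical pathology that a flat family of (rational) nodal curves can degenerate to a smooth curve of positive genus; here a specializing connecting tree can only acquire further rational components, never genus. I expect this to be the heart of the argument.

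First I would fix a $\pi$-relatively ample invertible sheaf $\mathcal{L}$ on $X$ and, for each integer $d\ge 0$, form the relative Kontsevich space $M_d:=\overline{M}_{0,2}(X/S,d)$ of $2$-pointed genus-zero stable maps of $\mathcal{L}$-degree $d$ into the fibers of $\pi$. For $\pi$ projective this space is proper over $S$, and the two marked points give a proper $S$-morphism $\mathrm{ev}_d\colon M_d\to X\times_S X$. By construction a point of $M_d$ lying over $s\in S$ is precisely a tree of rational curves in the fiber $X_s$ equipped with two marked points; hence, writing $M_{d,s}$ for the fiber of $M_d\to S$, the set $\mathrm{ev}_d(M_{d,s})$ is exactly the set of pairs of points of $X_s$ that are joined by a tree of rational curves of degree $d$. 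The whole proposition is thereby reduced to showing that, for every $s$, such pairs are dense in $X_{\overline{\kappa(s)}}\times X_{\overline{\kappa(s)}}$.

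Next I would use the hypothesis on the generic fiber. Since $X_{\overline{K}}$ is rationally chain connected, every general pair of its points is joined by some tree of rational curves, and the pairs joined by a tree of degree at most $d$ form the image of $\mathrm{ev}_{\le d}$ over the geometric generic point. These images increase with $d$ and, by Definition~\ref{rationallychainconnecteddefinition}, their union is dense; as $X_{\overline K}\times X_{\overline K}$ is Noetherian, the ascending chain of their closures stabilizes, so for some fixed $d$ the morphism $\mathrm{ev}_d$ is already dominant over the generic point $\eta$ of $S$. Because $\mathrm{ev}_d$ is proper, its image $Z:=\mathrm{ev}_d(M_d)$ is closed in $X\times_S X$; being dense in the generic fiber, it must contain the closure $W$ of that fiber, that is, the unique irreducible component of $X\times_S X$ dominating $S$ (the generic fiber is irreducible since $X_{\overline K}$ is).

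Finally I would descend to an arbitrary fiber. For $s\in S$ one has $Z_s=\mathrm{ev}_d(M_{d,s})$, since $\mathrm{ev}_d$ is a morphism over $S$; thus $Z_s$ is precisely the set of pairs of $X_s$ joined by a tree of rational curves, and it contains $W_s$. It then remains to see that $W_s$ is dense in $X_s\times X_s$: when $\pi$ is flat with geometrically irreducible fibers (as for a K3-fibration) the morphism $X\times_S X\to S$ has geometrically irreducible fibers of constant dimension, so its total space is irreducible, $W=X\times_S X$, and density is immediate; in the general case one invokes upper semicontinuity of fiber dimension on the irreducible $W\to S$ together with a fiber-dimension count to see that $W_s$ fills a dense subset of each component. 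Passing from $\kappa(s)$ to $\overline{\kappa(s)}$ preserves dominance, and dense image of the evaluation is exactly the assertion that $X_{\overline{\kappa(s)}}$ is rationally chain connected. The only genuine obstacle is the one isolated in the first paragraph --- guaranteeing that the specialized connecting curve is still a tree of \emph{rational} curves --- which is resolved structurally by working inside $M_d$ rather than with cycles; the remaining points (the uniform degree bound and the density count in the special fiber) I expect to be routine.
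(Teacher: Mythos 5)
Your proposal follows the same basic strategy as the paper --- exhibit a parameter space of connecting trees that is proper over $S$, observe that the image of the evaluation map is therefore closed, and specialize from the generic fibre --- but your implementation differs in two ways, one cosmetic and one substantive. The cosmetic difference is the choice of $\overline{M}_{0,2}(X/S,d)$ over the Hilbert scheme: the paper works with $\operatorname{Hilb}_{et+1}(X/S)$, and since the Hilbert polynomial $et+1$ forces Euler characteristic $1$, hence connectedness and arithmetic genus $0$, a specializing member automatically remains a tree of rational curves; the pathology you guard against (rational curves limiting to something of positive genus) cannot occur there either, so the Kontsevich space buys convenience (marked points for free) rather than necessity. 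The substantive difference is your two-pointed evaluation $\mathrm{ev}_d\colon \overline{M}_{0,2}(X/S,d)\to X\times_S X$. The paper only proves that the universal family of trees $\mathcal{C}_e$ surjects onto $X$, which shows that every fibre is \emph{covered} by trees of rational curves, and then asserts chain-connectedness of the fibres; that last step does not follow as written, since two points each lying on some tree need not lie on a common tree. Your evaluation into $X\times_S X$, together with the Noetherian stabilization producing a uniform degree $d$, supplies exactly the missing bookkeeping, so on this point your argument is the more complete one.

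The one place where your proof still has a gap is the density of $W_s$ in $X_s\times_{\kappa(s)} X_s$. When $\pi$ is flat with geometrically irreducible fibres --- which covers the K3-fibrations to which the proposition is applied --- $X\times_S X$ is irreducible, $W=X\times_S X$, and you are done. But the proposed ``fiber-dimension count'' cannot rescue the stated generality: if a special fibre jumps in dimension, $W_s$ can be a proper closed subset of $X_s\times X_s$ and the conclusion can genuinely fail. For instance, let $S$ be the cone over an elliptic curve $E$, let $\widetilde{S}\to S$ be its resolution with exceptional curve $E$, and let $X=\widetilde{S}\times\mathbb{P}^1$; then $X\to S$ is projective between irreducible Noetherian schemes, the geometric generic fibre is $\mathbb{P}^1$ (irreducible and rationally chain connected), yet the fibre over the vertex is $E\times\mathbb{P}^1$, which is not rationally chain connected. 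So the remaining step is not routine but requires an additional hypothesis (flatness with geometrically irreducible fibres, or equidimensionality), which should be made explicit; for the application in the paper this costs nothing, and the same caveat applies to the paper's own argument.
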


\begin{proof}
Fix a very ample invertible sheaf $\mathcal{O}_X(1)$.  Let 
\[\hilb_{rat}(X/S)=\bigcup_e \hilb_{et+1}(X/S)\]
be the Hilbert scheme of connected tree of rational curves on $X$, where $\hilb_{et+1}(X/S)$
parametrizes trees of rational curves of degree $e$, and $\hilb_{et+1}(X/S)$ is projective 
over $S$ for every $e$.  Denote by $\mathcal{C}_e$ the universal family of trees of rational 
curves over $\hilb_{et+1}(X/S)$.  That is, we have the following diagram.

\[\xymatrix{
\mathcal{C}_e \ar[r]\ar[dr]  &  \hilb_{et+1}(X/S)\times_S X\ar[d]\ar[r]  &  X\ar[d]^{\pi}  \\
  &  \hilb_{et+1}(X/S)\ar[r]  &  S
}\]
Base change the above diagram to $\overline{K}$,
we get
\[\xymatrix{
\mathcal{C}_e\times_S\overline{K} \ar[r]\ar[dr]  &  \hilb_{et+1}(X_{\overline{K}}/\overline{K})\times_{\overline{K}} X_{\overline{K}}\ar[d]\ar[r]  
&  X_{\overline{K}}\ar[d]^{\pi}  \\
  &  \hilb_{et+1}(X_{\overline{K}}/ \overline{K})\ar[r]  &  \spec \overline{K}
}\]
where $\mathcal{C}_e\times_S\overline{K}$ is the universal family of trees of
rational curves over $\hilb_{et+1}(X_{\overline{K}}/ \overline{K})$.
Since $X_{\overline{K}}$ is rationally chain connected, a general pair of points, 
not necessarily closed,  in $X_{\overline{K}}$
can be joined by a tree of rational curves.  Thus, the image 
of $\mathcal{C}_e\times_S\overline{K}$ in $X_{\overline{K}}$ contains an open
dense subset.  Since $\mathcal{C}_e\times_S\overline{K}$ is proper over $S$, the morphism
from $\mathcal{C}_e\times_S\overline{K}$ to $X_{\overline{K}}$ is surjective.
Therefore, the morphism $\mathcal{C}_e\times_S K\to X_K$ is surjective.
So the image of the proper scheme $\mathcal{C}_e/S$ in $X$ contains the generic point
of $X$.  Then, the universal family of trees of rational curves $\mathcal{C}_e$ maps 
surjectively onto the scheme $X$.  This gives that every pair of points 
in $X_{\overline{\kappa(s)}}$ can be connected by a tree of rational curves.
\end{proof}

\begin{corollary}\label{rationallyconnectedgenericfiber}
Fix a field $k$.
Let $\pi: X\to \mathbb{P}_k^1$ be a projective morphism from an irreducible variety $X$.
Denote by $K$ the function field of $\mathbb{P}_k^1$.  Assume that $X_{\overline{K}}$
is irreducible, rationally chain connected, and there is a section of $\pi$.
Then, $X$ is rationally chain connected.
\end{corollary}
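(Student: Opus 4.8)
```latex
\begin{proof}[Proof proposal for Corollary~\ref{rationallyconnectedgenericfiber}]
The plan is to deduce this corollary from Proposition~\ref{rationallyconnectedgeometricfiber} together with the section hypothesis, which should let me pass from the rational-chain-connectivity of individual geometric fibers to that of the whole threefold $X$. First I would apply Proposition~\ref{rationallyconnectedgeometricfiber} directly: taking $S=\mathbb{P}_k^1$, the hypotheses that $X_{\overline{K}}$ is irreducible and rationally chain connected are exactly what that proposition requires, so I immediately obtain that every geometric fiber $X_{\overline{\kappa(s)}}$ is rationally chain connected. The content of the corollary is therefore not the fiberwise statement but the \emph{global} statement that any two (general, possibly non-closed) points of $X$ itself can be joined by a tree of rational curves, even when they lie over different points of the base.

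The key idea is that the section $\sigma: \mathbb{P}_k^1 \to X$ supplies a ``spine'' that threads through the fibers and allows me to stitch together trees living in different fibers. Concretely, I would argue as follows. Given two general points $x_1, x_2 \in X$ lying over base points $s_1, s_2 \in \mathbb{P}_k^1$, the section $\sigma$ meets the fiber $X_{s_i}$ in a point $\sigma(s_i)$. Within each fiber $X_{\overline{\kappa(s_i)}}$, which is rationally chain connected by the first step, I can connect $x_i$ to $\sigma(s_i)$ by a tree of rational curves. Since $\mathbb{P}_k^1$ is itself rational, the image $\sigma(\mathbb{P}_k^1)\subset X$ is a rational curve connecting $\sigma(s_1)$ to $\sigma(s_2)$. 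Concatenating the two fiberwise trees with this section curve produces a connected tree of rational curves in $X$ joining $x_1$ to $x_2$, which is precisely what rational-chain-connectivity of $X$ demands.

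The step I expect to be the main obstacle is the bookkeeping needed to make ``general pair of points'' and ``non-closed points'' behave correctly in this concatenation, matching Definition~\ref{rationallychainconnecteddefinition}. The fiberwise rational-chain-connectivity provided by Proposition~\ref{rationallyconnectedgeometricfiber} is a statement over $\overline{\kappa(s)}$, so I must be careful that the trees I glue and the section curve are all defined over a common field extension and that their images in $X_{\overline{k}}\to X$ genuinely contain the prescribed points; the curve $\sigma(\mathbb{P}_k^1)$ must be verified to be a tree (arithmetic genus zero, geometrically rational components) so that the concatenation is again a tree of rational curves. I would also want to confirm that the point $\sigma(s_i)$ can be connected to a general $x_i$ rather than only to very general points, but since the generic fiber is rationally chain connected and the connecting loci sweep out dense opens, a standard specialization or openness argument should close this gap. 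Once these definitional checks are in place, the concatenation argument yields the result.
\end{proof}
```
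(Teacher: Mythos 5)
Your proposal is correct and takes essentially the same route as the paper, whose own proof is just the one-line assertion that the corollary ``directly follows'' from Definition~\ref{rationallychainconnecteddefinition} and Proposition~\ref{rationallyconnectedgeometricfiber}; your concatenation of fiberwise trees along the section is precisely the argument that one-liner leaves implicit. The only point you flag as a possible obstacle --- that $\sigma(s_i)$ need not be a \emph{general} point of its fiber --- is closed by the standard fact that a proper rationally chain connected variety over an algebraically closed field has \emph{every} pair of points joined by a chain of rational curves (cf.\ \cite{Kollar}, IV.3.5), and indeed the proof of Proposition~\ref{rationallyconnectedgeometricfiber} already asserts its conclusion for every pair of points of the geometric fiber.
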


\begin{proof}
This directly follows from Definition~\ref{rationallychainconnecteddefinition}
and Proposition~\ref{rationallyconnectedgeometricfiber}.
\end{proof}

\section{K3 fibrations}\label{technicalsection}

Let \(k\) be a perfect field of characteristic \(p>0\).
Let \(K\) be the field of fractions of \(W(k)\).
Let \(\pi: X \to \mathbb{P}^{1}_{k}\) be a \emph{smooth},  \emph{projective} morphism over
\(k\).

Then we can define convergent F-isocrystals \(\mathcal{E}^i\) on
\(\mathbb{P}^{1}_k\) subject to the following conditions:
\begin{enumerate}[label=\alph*)]
\item for any perfect-field-valued point \(t: \mathrm{Spec}(F)\to X\) of
\(\mathbb{P}^1_k\), \(t^{\ast}\mathcal{E}^{i}=\mathrm{H}_{\text{rig}}^{i}(X_t)\)
(rigid cohomology over the field \(W(F)[1/p]\)),
\item there exists a spectral sequence with
\begin{equation}
\label{eq:1}\tag{\(\ast\)}
E_2^{i,j}=\mathrm{H}^{j}_{\text{rig}}(\mathbb{P}^{1}_k,\mathcal{E}^{j})
\Rightarrow \mathrm{H}^{i+j}_{\text{rig}}(X)
\end{equation}
\end{enumerate}
(See Remark 2.8 of \cite{Morrow} for details). Note that our hypotheses, smooth and projective, 
are much stronger than semi-stable as needed in \cite{Morrow}.

\begin{lemma}\label{org212707e}
In the situation above, the convergent F-isocrystals \(\mathcal{E}^{i}\)
are trivial.
\end{lemma}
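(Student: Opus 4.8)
The plan is to exploit the rigidity of the projective line: in characteristic zero any coherent module carrying an integrable connection over $\mathbb{P}^1$ must be constant, because $\mathbb{P}^1$ has no nonzero global one-forms and no nontrivial bundle admitting a connection. I would run this argument on the rigid-analytic generic fibre, where a convergent $F$-isocrystal becomes an honest module with connection, and then read off the Frobenius structure at the end.

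First I would pass from the isocrystal to a vector bundle with connection. Since $\mathbb{P}^1_k$ is smooth and proper, each convergent isocrystal $\mathcal{E}^i$ is the same datum as a locally free coherent $\mathcal{O}$-module $\mathcal{V}^i$ equipped with an integrable connection $\nabla^i$ on the rigid-analytic generic fibre $\mathbb{P}^{1,\mathrm{an}}_K$ of the formal completion of $\mathbb{P}^1_{W(k)}$ (Ogus's description of convergent isocrystals; properness removes any overconvergence subtlety, and integrability is automatic in relative dimension one). By rigid-analytic GAGA the pair $(\mathcal{V}^i,\nabla^i)$ is algebraic, so I may regard $\mathcal{V}^i$ as an algebraic vector bundle on $\mathbb{P}^1_K$ equipped with an algebraic connection.

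Second I would show $\mathcal{V}^i$ is trivial and $\nabla^i$ is the tautological connection. By the Birkhoff--Grothendieck theorem $\mathcal{V}^i\cong\bigoplus_j\mathcal{O}(a_j)$. The existence of an algebraic connection on a vector bundle over a smooth projective curve in characteristic zero forces every indecomposable summand to have degree zero (Atiyah--Weil); since the indecomposable bundles on $\mathbb{P}^1_K$ are exactly the line bundles $\mathcal{O}(a)$, this means all $a_j=0$ and hence $\mathcal{V}^i\cong\mathcal{O}^{\oplus r}$. Writing $\nabla^i=d+A$ with $A\in H^0(\mathbb{P}^1_K,\mathrm{End}(\mathcal{V}^i)\otimes\Omega^1)$ and using $H^0(\mathbb{P}^1_K,\Omega^1)=0$, I get $A=0$, so the underlying isocrystal is the constant one with the trivial connection.

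Finally I would recover the Frobenius structure. The $F$-isocrystal structure is a horizontal isomorphism $F\colon\sigma^{\ast}\mathcal{E}^i\xrightarrow{\ \sim\ }\mathcal{E}^i$; relative to the trivialisation above it is given by a constant matrix, because horizontal sections of the (trivially connected) sheaf of homomorphisms are the global constants $H^0(\mathbb{P}^1_K,\mathcal{O})=K$. Hence $F$ lies in $\mathrm{GL}_r(K)$, so $\mathcal{E}^i$ is the pullback under $\mathbb{P}^1_k\to\mathrm{Spec}(k)$ of the $F$-isocrystal $(K^{\oplus r},F)$ --- that is, $\mathcal{E}^i$ is trivial (constant), and by condition a) it is identified with the fibre $\mathrm{H}^i_{\mathrm{rig}}(X_t)$ at any perfect point $t$. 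The main obstacle is the first step: making precise the dictionary between convergent $F$-isocrystals on the proper base $\mathbb{P}^1_k$ and algebraic vector bundles with connection on $\mathbb{P}^1_K$, i.e. justifying both the coherence/convergence of the underlying module and the application of rigid GAGA to the connection. Once the object is known to be algebraic, the triviality is formal from the geometry of $\mathbb{P}^1$.
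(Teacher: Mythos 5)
Your proposal is correct and shares the paper's overall skeleton (Ogus's dictionary between convergent isocrystals on $\mathbb{P}^1_k$ and modules with connection on $\mathbb{P}^{1,\mathrm{an}}_K$, followed by rigid-analytic GAGA), but it finishes by a genuinely different and more self-contained route. Where the paper reduces to ``only trivial bundles on $\mathbb{P}^1$ admit connections'' and then disposes of the connection by spreading out to a finitely generated field, embedding into $\mathbb{C}$, and invoking an unspecified complex-analytic argument, you instead argue directly: Birkhoff--Grothendieck plus the Atiyah--Weil degree-zero criterion forces $\mathcal{V}^i\cong\mathcal{O}^{\oplus r}$, and then $\nabla^i=d+A$ with $A\in H^0(\mathbb{P}^1_K,\mathrm{End}(\mathcal{V}^i)\otimes\Omega^1)=0$ since $\Omega^1_{\mathbb{P}^1_K}\cong\mathcal{O}(-2)$ has no global sections. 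This is cleaner and avoids the Lefschetz-principle step entirely; you also make explicit the Frobenius structure (a horizontal, hence constant, matrix in $\mathrm{GL}_r(K)$), which the paper leaves implicit but which is what is actually used later to conclude constancy of $\mathcal{E}^2$. One small imprecision: you cannot literally ``apply GAGA to the pair $(\mathcal{V}^i,\nabla^i)$,'' since a connection is a differential operator rather than an $\mathcal{O}$-linear map; the paper is more careful here, algebraizing the bundle first and then comparing the torsor of connections under $H^0(\mathrm{End}\otimes\Omega^1)$ via GAGA. In your setup this is harmless, because once the bundle is trivial the difference $\nabla^i-d$ is a global section of a coherent sheaf that vanishes on both the algebraic and analytic sides, but the step deserves a sentence.
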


\begin{proof}
Since \(\mathbb{P}^{1}_{k}\) admits a lift \(\mathbb{P}^{1}_K\), a convergent
isocrystal \(\mathcal{E}\) on \(\mathbb{P}^{1}_{k}\) is the same as a
(necessarily integrable) connection \((E^{\text{an}},\nabla)\) on the rigid analytic space
\(\mathbb{P}^{1,\text{an}}_{K}\). With some translation between formal and rigid
geometry, this assertion is precisely Proposition 1.15 of \cite{Ogus}.

Since \(\mathbb{P}^{1,\text{an}}_{K}\) is smooth and proper, the rigid analytic
GAGA theorem (cf. \cite{Kopf}) implies that \(E^{\text{an}}\) comes from an algebraic vector bundle \(E\). If
we can show that \(E\) already has an
algebraic connection, then choosing a connection on \(E\) amounts to choosing an
\(\textit{End}(E^{\text{an}})\)-valued \(1\)-form on
\(\mathbb{P}^{1,\text{an}}\); the connection is thereby necessarily algebraic by
applying GAGA again.

Hence, one needs to show that a vector bundle \(E\) on \(\mathbb{P}^{1}_{K}\)
admits an algebraic connection, given that \(E^{\text{an}}\) admits a
connection. This follows from the fact that in both rigid and algebraic
geometry, only trivial vector bundles on \(\mathbb{P}^{1}\) allow connections.

Finally, the algebraic connection \((E,\nabla)\) can be defined on some finitely
generated field and is seen to be trivial by embedding the field to
\(\mathbb{C}\) and using a complex analytic argument.
\end{proof}

\begin{proposition}\label{supersingularfibration}
Let $X$ be a smooth and projective threefold over a perfect field $k$ in 
characteristic $p>0$.  Assume that $X$ admits a K3-fibration and 
the slopes of
\(\mathrm{H}^{2}_{\text{rig}}(X)\) are all \(1\), then all the geometric fibers
of \(\pi\) are supersingular K3 surfaces.
\end{proposition}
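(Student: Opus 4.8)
The plan is to prove that for every geometric point $\bar t$ of $\mathbb{P}^1_k$ the slopes of $\mathrm{H}^2_{\text{rig}}(X_{\bar t})$ are all equal to $1$; by the equivalence of (iv) and (i) in Theorem~\ref{supersingularK3} this is exactly the supersingularity of the K3 surface $X_{\bar t}$. First I would record the shape of the isocrystals $\mathcal{E}^i$ attached to $\pi$. Since every geometric fiber is a K3 surface, $\mathcal{E}^0$ is the unit isocrystal $\mathcal{O}_{\mathbb{P}^1_k}$, one has $\mathcal{E}^1=0$ (a K3 surface has $b_1=0$, so $\mathrm{H}^1_{\text{rig}}(X_t)=0$), and $\mathcal{E}^2$ has rank $22$ with fiber $t^{\ast}\mathcal{E}^2=\mathrm{H}^2_{\text{rig}}(X_t)$. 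By Lemma~\ref{org212707e} each $\mathcal{E}^i$ is trivial, so $\mathcal{E}^2$ is a constant $F$-isocrystal $V\otimes_K\mathcal{O}_{\mathbb{P}^1_k}$ whose underlying connection is trivial and whose Frobenius is accordingly a constant (horizontal) endomorphism.

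Next I would extract the degree-$2$ information from the spectral sequence \eqref{eq:1}. Because $\mathcal{E}^1=0$ and $\mathrm{H}^i_{\text{rig}}(\mathbb{P}^1_k,-)$ vanishes for $i>2$, every differential into or out of the two surviving terms $E_2^{2,0}=\mathrm{H}^2_{\text{rig}}(\mathbb{P}^1_k,\mathcal{E}^0)$ and $E_2^{0,2}=\mathrm{H}^0_{\text{rig}}(\mathbb{P}^1_k,\mathcal{E}^2)$ vanishes, since the relevant sources and targets $E_2^{0,1}$, $E_2^{2,1}$, $E_3^{3,0}$ are all zero. Together with $E_2^{1,1}=\mathrm{H}^1_{\text{rig}}(\mathbb{P}^1_k,\mathcal{E}^1)=0$, the Leray filtration collapses to a short exact sequence of $F$-isocrystals over $K$,
$$0\to \mathrm{H}^2_{\text{rig}}(\mathbb{P}^1_k)\to \mathrm{H}^2_{\text{rig}}(X)\to \mathrm{H}^0_{\text{rig}}(\mathbb{P}^1_k,\mathcal{E}^2)\to 0,$$
whose sub $E_2^{2,0}=\mathrm{H}^2_{\text{rig}}(\mathbb{P}^1_k)\cong K(-1)$ has slope $1$.

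Now I would transfer the slope hypothesis. Slopes are additive along short exact sequences of $F$-isocrystals (the Newton polygon of the middle term is the concatenation of those of the sub and the quotient), so the assumption that all slopes of $\mathrm{H}^2_{\text{rig}}(X)$ equal $1$, combined with the slope-$1$ sub $K(-1)$, forces all slopes of the quotient $\mathrm{H}^0_{\text{rig}}(\mathbb{P}^1_k,\mathcal{E}^2)$ to equal $1$. Since $\mathcal{E}^2$ is the constant isocrystal $V\otimes_K\mathcal{O}_{\mathbb{P}^1_k}$, cohomology of the constant coefficient gives $\mathrm{H}^0_{\text{rig}}(\mathbb{P}^1_k,\mathcal{E}^2)=V\otimes_K\mathrm{H}^0_{\text{rig}}(\mathbb{P}^1_k)=V$ as $F$-isocrystals, so all slopes of $V$ equal $1$. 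Finally, triviality of $\mathcal{E}^2$ means its fiber at a perfect-field-valued point $\bar t$ is $V\otimes_K W(\kappa(\bar t))[1/p]$, whose Newton polygon coincides with that of $V$; hence $\mathrm{H}^2_{\text{rig}}(X_{\bar t})$ has all slopes equal to $1$, and by Theorem~\ref{supersingularK3} the fiber $X_{\bar t}$ is a supersingular K3 surface.

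The main obstacle I anticipate is the bookkeeping in the last step: one must be certain that "trivial" in Lemma~\ref{org212707e} controls the Frobenius and not merely the underlying vector bundle with connection, so that the Newton polygon is genuinely constant across $\mathbb{P}^1_k$ and the identification $\mathrm{H}^0_{\text{rig}}(\mathbb{P}^1_k,\mathcal{E}^2)=V=\mathrm{H}^2_{\text{rig}}(X_{\bar t})$ is compatible with Frobenius. The observation that a horizontal endomorphism of a trivial connection on $\mathbb{P}^1$ is a constant matrix secures this point, but it is the step that requires the most care; by comparison the collapse of the two-row spectral sequence and the additivity of slopes are routine.
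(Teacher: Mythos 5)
Your proof is correct and follows essentially the same route as the paper: the spectral sequence \eqref{eq:1}, vanishing of $\mathcal{E}^1$, triviality of $\mathcal{E}^2$ via Lemma~\ref{org212707e}, degeneration at $E_2$, and transfer of the slope-$1$ hypothesis to the fibers. You are in fact slightly more careful on two points: you retain the slope-$1$ subobject $E_2^{2,0}\cong K(-1)$ in the Leray filtration (the paper writes $\mathrm{H}^2_{\text{rig}}(X)=\mathrm{H}^{2}_{\text{rig}}(S)$, omitting this rank-one piece, harmlessly since its slope is $1$), and you conclude for all geometric fibers directly from the constancy of the $F$-isocrystal and the horizontality of Frobenius, whereas the paper first treats $k$-valued fibers and then invokes Crew's form of Grothendieck's specialization theorem.
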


\begin{proof}
By the K3 hypothesis, \(\mathcal{E}^1=\mathcal{E}^3=0\).
Thus in the sepectral sequence~\eqref{eq:1},
\(E_{2}^{2,1} = E_{2}^{0,3}=E_{2}^{0,1}=E_{2}^{1,1}=0\).
By Lemma~\ref{org212707e}, \(\mathcal{E}^2\) is a constant crystal, so
\begin{equation*}
E_2^{1,2}=\mathrm{H}^{1}_{\text{rig}}(\mathbb{P}^1_k,\mathcal{E}^2) = 0.
\end{equation*}
By Property a) and the constancy of \(\mathcal{E}^2\), all the \(k\)-valued
fibers \(S\) of \(\pi\) have the same rigid cohomology. Thus
\begin{equation*}
E_2^{0,2} = \mathrm{H}^0_{\text{rig}}(\mathbb{P}^1_k,\mathcal{E}^2) = \mathrm{H}_{\text{rig}}^{2}(S).
\end{equation*}
It follows from the above vanishing that the spectral sequence~\eqref{eq:1}
degenerates at \(E_2\). Thus
\(\mathrm{H}^2_{\text{rig}}(X)=\mathrm{H}^{2}_{\text{rig}}(S)\) for any fiber
\(S\) of \(\pi\). Since all the slopes of \(\mathrm{H}^2_{\text{rig}}(X)\) are
\(1\), it follows that any \(k\)-valued fiber \(S\) of \(\pi\) is a
supersingular K3 surface. The assertion for any geometric fiber then follows
from Grothendieck's specialization theorem, spelled out in our needed form in by
Theorem 2.1 of \cite{Crew}.
\end{proof}


\section{Proof of the main theorem}\label{proofofthemaintheorem}

\begin{proof}
If $X$ is rationally chain connected, then the slopes of 
\(\mathrm{H}^{2}_{\text{cris}}(X)\) are all \(1\) by Theorem~\ref{weakrationallychainconnectedK3} and Theorem~\ref{chowzero}.

Conversely, suppose that the slopes of \(\mathrm{H}^{2}_{\text{cris}}(X)\) are all \(1\).  
Let $\pi: X\to \mathbb{P}_k^1$ be the K3-fibration.
By Proposition~\ref{supersingularfibration}, all the geometric fibers
of \(\pi\) are supersingular K3 surfaces.  In particular, $X_{\overline{k(t)}}$ is a supersingular 
Kummer surface.  By \cite{shioda} Theorem 1.1 on p.154, $X_{\overline{k(t)}}$ is unirational, so rationally connected.
Since the Kummer surface $X_{k(t)}$ is the minimal resolution of a quotient of an Abelian surface over $k(t)$ under the involution, $X_{k(t)}$
has a $k(t)$-point, which gives a section of $\pi$.  Therefore, by Corollary~\ref{rationallyconnectedgenericfiber},
the total space $X$ is rationally chain connected, so uniruled (\cite{Kollar}, (3.3.4), p.200).
\end{proof}

\begin{remark}
Theorem~\ref{maintheorem} also applies to the following situation.  Let $k$ be an algebraically closed field $\chara k\ge 3$.
Let $X$ be a smooth, projective threefold over $k$ with a K3-fibration.  
Denote by $k(t)$ the function field of $\mathbb{P}_k^1$.
Assume that $X\to \mathbb{P}_k^1$ has a section and Conjecture~\ref{artinconjecture} holds.  
Then, $X$ is rationally chain connected if and only if 
the slopes of \(\mathrm{H}^{2}_{\text{cris}}(X)\) are all \(1\)
if and only if it is supersingular.
\end{remark}


 \vspace{1em}

\noindent\small{\textsc{Mathematics Department, Stony Brook University, Stony Brook, NY, 11794} }

\noindent\small{Email: \texttt{santai.qu@stonybrook.edu}}


\begin{thebibliography}{99}
 
 
 \bibitem{stefan} Stefan Schr\"oer, \emph{Some Calabi-Yau threefolds with obstructed deformations over the Witt vectors}, Compositio Math. 140 (2004), 1579-1592, DOI: 10.1112/S0010437X04000545.
 
 \bibitem{shioda} Tetsuji Shioda, \emph{Some Results on Unirationality of Algebraic Surfaces}, Math. Ann. 230, 153-168 (1977).
 
 \bibitem{shioda2} Tetsuji Shioda, \emph{An Example of Unirational surfaces in Characteristic $p$}, Math. Ann. 211, 233-236 (1974).
 
 \bibitem{YT} Yukihide Takayama, \emph{Calabi-Yau threefolds in positive characteristic},  preprint, available at \url{https://arxiv.org/abs/1701.07593#}.
 
 \bibitem{Kollar} J\'anos Koll\'ar, \emph{Rational curves on algebraic varieties}, volume 32 of \emph{Ergebnisse der Mathematik und ihrer Grenzgebiete .  3.  Folge.  A Series of Modern Surveys in Mathematics}.  Springer-Verlag, Berlin, 1996.  
 
 \bibitem{Esnault} H\'el\`ene Esnault, \emph{Varieties over a finite field with trivial Chow group of 0-cycles have a rational point}, Invent. Math. 151, 187-191 (2003).  DOI: 10.1007/s00222-002-0261-8.
 
 \bibitem{sharv} Rudakov A.N., Shafarevich I.R., \emph{Supersingular K3 surfaces over fields of characteristic 2}, Math. USSR-Izv. 13 (1979) 147–165.
 
 \bibitem{sharv2} Rudakov A.N., Shafarevich I.R., \emph{Quasi-elliptic surfaces of type K3}, Russ. Math. Surv. 33 (1978) 215–216.
 
 \bibitem{artin} Artin M., \emph{Supersingular K3 surfaces}, Ann. Scient. Ec. Norm. Sup. 7 (1974) 543–568.
 
 \bibitem{milne} Milne J.S., \emph{Duality in the flat cohomology of a surface}, Ann. Scient. Ec. Norm. Sup. 9 (1976) 171–202.
 
 \bibitem{Morrow} M.~Morrow, \href{https://webusers.imj-prg.fr/\~matthew.morrow/Morrow,\%20M.,\%20Variational\%20Tate.pdf}{\emph{A Variational Tate Conjecture in crystalline cohomology}}, Journal of the European Mathematical Society, à paraître.
 
 \bibitem{Ogus} Ogus, \href{http://dx.doi.org/10.1215/S0012-7094-84-05136-6}{\emph{F-isocrystals and de Rham cohomology. II. Convergent isocrystals}}. Duke Math. J., 51(4), 765–850.
 
 \bibitem{Kopf} U.~Köpf, \emph{Über eigentliche Familien algebraischer Varietäten über affinoiden Räumen}, Schr. Math. Inst. Univ. Münster (2) Heft 7.
 
 \bibitem{artinmazur} Artin, M., Mazur, B., \emph{Formal groups arising from algebraic varieties}. Ann. Sci. École Norm. Sup. (4), 10(1), 87–131.
 
 \bibitem{Crew} Crew, R., \href{http://dx.doi.org/10.1215/S0012-7094-86-05340-8}{\emph{Specialization of crystalline cohomology}}. Duke Math. J., 53(3), 1986, 749–757.
 
 \bibitem{kat} Toshiyuki Katsura, \emph{Generalized Kummer surfaces and their unirationality in characteristic p}, J. Fac. Sci. Univ. Tokyo, Sect. IA, Math., 34 (1987), 1-41.
 
 \bibitem{open} S. J. Edixhoven, B. J. J. Moonen, F. Oort, editors, \emph{Open problems in algebraic geometry}, Bull. Sci. Math. 125, 1 (2001) 1-22.
 
 \bibitem{liedtke} Christian Liedtke, \emph{Supersingular K3 surfaces are unirational}, Invent. Math. 200 (2015), no. 3, 979-1014.
 
 
 
 \end{thebibliography}
 \end{document}